\documentclass[a4paper]{article}
\usepackage[utf8]{inputenc}
\usepackage{amsmath,amssymb,amsthm}
\usepackage[colorlinks,citecolor=blue]{hyperref}
\usepackage{mathrsfs}

\newcommand{\QQ}{\mathbb{Q}}
\newcommand{\RR}{\mathbb{R}}
\newcommand{\PP}{\mathbb{P}}
\newcommand{\Li}{\operatorname{Li}}
\newcommand{\ep}{\varepsilon}
\newcommand{\logc}{\mathscr{L}}
\newcommand{\aA}{\mathscr{A}}
\newcommand{\aB}{\mathscr{B}}
\newcommand{\aMZV}{\mathscr{A}_{\mathsf{MZV}}}
\newcommand{\aMTV}{\mathscr{A}_{\mathsf{MTV}}}
\newcommand{\aMTVc}{\mathscr{A}_{\mathsf{MTV},c}}

\newtheorem{thm}{Theorem}

\title{Multiple $T$-values with one parameter}
\author{Frédéric Chapoton}
\date{\today}

\begin{document}

\maketitle

\section*{Introduction}

Multiple zeta values can be defined as the iterated integrals of the
differential forms $dt/t$ and $dt/(1-t)$ on the real interval
$[0,1]$. They are very important and central objects, in relation with
many fields, including knot theory, quantum groups and perturbative
quantum field theory \cite{waldschmidt, fresan}. They form an algebra
$\aMZV$ over $\QQ$, which has been studied a lot, and is conjecturally
graded by the weight.

Many variants of the multiple zeta values have been considered in the
literature. Some of these variants are also numbers, for example when
allowing various roots of unity as poles of the differential
forms. Other variants are functions, with one or more arguments, such
as multiple polylogarithms \cite{goncharov} or hyperlogarithms \cite{Panzer}.

Among all these variations on a theme, a specific one,
recently introduced by Kaneko and Tsumura in \cite[\S
5]{kaneko_tsumura_1} and further studied in \cite{kaneko_final}, deals with the iterated integrals of the forms
$dt/t$ and $2dt/(1-t^2)$. The resulting numbers, called multiple
$T$-values, also form an algebra $\aMTV$ over $\QQ$ under the shuffle
product. This new algebra has been less studied than the algebra of
multiple zeta values. Conjecturally, there is an inclusion
$\aMZV \subset \aMTV$.

The present article introduces a new algebra $\aMTVc$ of iterated integrals, whose
elements are functions of a parameter $c$. This algebra can be seen as
a common deformation of both the algebras $\aMZV$ and $\aMTV$, in the
following manner. 

For every admissible index $(k_1,\dots,k_r)$, there is a
function $Z_c(k_1,\dots,k_r)$ in $\aMTVc$. These functions span the
algebra $\aMTVc$, and their product is given by the usual shuffle
rule, exactly as for multiple zeta values. One can evaluate the
function $Z_c(k_1,\dots,k_r)$ when the parameter $c$ is a real number
with $c < 1$. When $c=0$, one recovers the multiple zeta value
$\zeta(k_1,\dots,k_r)$ and when $c=-1$, one recovers the multiple
$T$-value $T(k_1,\dots,k_r)$. It follows that both $\aMZV$ and $\aMTV$
are quotient algebras of $\aMTVc$.

We start here the study of $\aMTVc$. Our original motivation was to
generalize both multiple zeta values and multiple $T$-values while
keeping the duality relations. We therefore show that for every $c$, there is an
involution of $\PP^1$ that implies the duality relations for the
functions $Z_c$. We extend a result of Kaneko and Tsumura relating a
generating function of some $Z_c$ to the hypergeometric function
${}_2F_{1}$. Using computer experiments, we determine the first few
dimensions of the graded pieces of the algebra $\aMTVc$, assuming that
it is graded by the weight. We also propose a guess for the generating
series of the graded dimensions of the algebra $\aMTV$.

It seems that the algebra $\aMTVc$ is a new object, although it is
difficult to be sure, given the very large number of articles related
to multiple zeta values and their many variants. Let us also note that a
related study can be seen in \S 4.2 of \cite{mastrolia}, where our
main differential form appears in formula (4.47).

% apparait comme phi_2 en relation avec 2F1 dans pages 37, 39
% de https://indico.cern.ch/event/750565/contributions/3439556/attachments/1876236/3089572/mastrolia.pdf
% Pierpaolo Mastrolia, Mizera et alii
% voir notamment section 4.2 de https://link-springer-com.scd-rproxy.u-strasbg.fr/content/pdf/10.1007/JHEP05(2019)153.pdf

\section{Definition and first properties}

The letter $c$ will denote a parameter, either complex or real, not
equal to $1$. In most of the article, we will assume that $c$ is real
and $c < 1$.

Consider the two differential forms:
\begin{equation}
  \omega_0(t) = \frac{dt}{t} \quad\text{and}\quad\omega_{1}(t)=\frac{(1-c)dt}{(1-t)(1-ct)}.
\end{equation}
Note that one can also write
\begin{equation}
  \label{omega1}
   \omega_{1}(t) = \frac{dt}{1-t} - \frac{c dt}{1-ct}.
\end{equation}
Because of the assumption $c < 1$, the only singularity of the
differential form $\omega_1$ on the interval $[0,1]$ is therefore the simple
pole at $1$.

When $c=0$, these two differential forms become the two differential forms
$dt/t$ and $dt/(1-t)$ whose iterated integrals are the classical
multiple zeta values (MZV). When $c=-1$, they become the two
differential forms $\Omega_0 = dt/t$ and $\Omega_1 = 2dt/(1-t^2)$ whose iterated integrals
are Kaneko-Tsumura's multiple T-values (MTV) \cite{kaneko_final}.

For general $c$, one can consider the iterated integrals of $\omega_0$
and $\omega_{1}$ as functions of the parameter $c$. We will use the definition
\begin{equation}
  \label{iterated}
  I(\ep_1, \dots, \ep_k) = \mathop{\int\cdots\int}\limits_{0<t_1<\cdots <t_k<1} \omega_{\ep_1}(t_1) \cdots \omega_{\ep_k}(t_k),
\end{equation}
where each $\ep_i$ is either $0$ or $1$, with $\ep_1=1$ and $\ep_k=0$
to ensure convergence.

Using the standard conversion of indices, let us introduce
the functions defined by
\begin{equation}
  \label{conversion}
  Z_c(k_1,\dots,k_r) = I(1,0^{k_1-1},1,0^{k_2-1},\dots,1,0^{k_r-1})
\end{equation}
for $r \geq 1$ with $k_i \geq 1$ and $k_r \geq 2$. Here powers of $0$
stand for repeated zeroes.

We have used above the same conventions as Kaneko and Tsumura in \cite{kaneko_final}, so that
the comparison with their results would be simple.

By their definition as iterated integrals, the functions $Z_c$
satisfy the same shuffle product rule as multiple zeta values and
multiple T-values. This is most easily described as a sum over the
shuffle product of indices in the notation \eqref{iterated}.  For
example,
\begin{equation}
  Z_c(2) Z_c(3) = 6 Z_c(1,4) + 3 Z_c(2,3) + Z_c(3,2).
\end{equation}
The vector space over $\QQ$ spanned by all the functions $Z_c$ is
therefore a commutative algebra, denoted by $\aMTVc$. Moreover, for any fixed $c$, the
vector space over $\QQ$ spanned by all the values of $Z_c$ is also a
commutative algebra.

As a side remark, one could wonder what happens to the relationship of
multiple zeta values with the Drinfeld associator. One could naively
replace the KZ equation by
\begin{equation}
  \frac{dF}{dz} = \left(\frac{e_0}{z} + \frac{(1-c)e_1}{(1-z)(1-cz)}\right) F
\end{equation}
and ask about properties of the solution.

\subsection{Duality}

The functions $Z_c$ also satisfy the duality property, using the
change of variable
\begin{equation}
  \label{involution}
  s = \frac{t-1}{ct-1},
\end{equation}
which makes sense as soon as $c \not= 1$. This is an involution of
$\PP^1$ that exchanges $0$ and $1$, $\infty$ with $1/c$ and
maps the interval $[0,1]$ to itself. It also exchanges $\omega_0$ and
$\omega_{1}$ up to sign:
\begin{equation}
  \label{dlogs}
  -\frac{ds}{s} = \frac{dt}{1-t} - \frac{c dt}{1-ct},
\end{equation}
which is $\omega_{1}$ by \eqref{omega1}.

This implies, with the usual proof by change of variables, that the
standard duality relations known for multiple zeta values and
multiple $T$-values also hold for the functions $Z_c$. In the notation
of \eqref{iterated}, the sequences $(\ep_1,\ep_2,\dots,\ep_k)$ and
$(1-\ep_k,\dots,1-\ep_2,1-\ep_1)$ give the same iterated
integral. This translates via \eqref{conversion} into equalities between two functions $Z_c$,
including for example
\begin{equation}
  Z_c(1,2) = Z_c(3).
\end{equation}

The unique fixed point in $[0,1]$ of the involution \eqref{involution}
is
\begin{equation}
  -\frac{\sqrt{-c + 1} - 1}{c}.
\end{equation}
This can be used for the purpose of numerical computations, as a
convenient cut-point where to apply the composition-of-paths formula
for iterated integrals.

\begin{thm}
  There is an equality between generating series:
  \begin{multline}
    1-\sum_{m,n \geq 1} Z_c(\underbrace{1,\ldots,1}_{n-1},m+1) X^m Y^n
    =\\ (1-c) \frac{\Gamma(1-X)\Gamma(1-Y)}{\Gamma(1-X-Y)} {}_2F_{1}(1-X,1-Y;1-X-Y;c).
  \end{multline}
\end{thm}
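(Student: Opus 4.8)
The plan is to compute the left-hand generating series directly from the iterated-integral definition. The terms $Z_c(\underbrace{1,\dots,1}_{n-1},m+1)$ correspond, in the notation of \eqref{iterated}, to the integrals $I(1^{n},0^{m})$, that is iterated integrals of $n$ copies of $\omega_1$ followed by $m$ copies of $\omega_0$. So I would first assemble the generating function
\begin{equation}
  F(X,Y) = \sum_{m,n \geq 1} I(1^{n},0^{m})\, X^m Y^n,
\end{equation}
and recognize it as a single integral. The standard trick is that summing $X^m$ over iterated integrals of $\omega_0 = dt/t$ with the innermost variable running up to $t$ produces a factor $t^{-X}$ (up to the usual $\Gamma$-normalization coming from $\sum_m X^m \int \tfrac{(\log(t/t'))^{m-1}}{(m-1)!}\cdots$), and similarly summing $Y^n$ over iterated integrals of $\omega_1$ produces a power of $(1-t)/(1-ct)$, because by \eqref{omega1} the one-form $\omega_1$ is $d\log\frac{1-t}{1-ct}$ up to sign.

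Concretely, the key step is to show that the whole sum collapses to a one-dimensional beta-type integral. I expect to obtain something of the shape
\begin{equation}
  F(X,Y) = \int_0^1 \left(\frac{1-ct}{1-t}\right)^{-Y} t^{-X}\,\frac{(1-c)\,dt}{(1-t)(1-ct)},
\end{equation}
after exchanging summation and integration and resumming the two geometric/exponential families; the combinatorial heart is checking that the nested-integral structure exponentiates correctly into these two closed-form factors, with the weights $X$ and $Y$ landing on the correct forms. Then $1 - F(X,Y)$ matches the left-hand side once the empty ($m=n=0$) correction is accounted for.

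The final step is to evaluate this integral in closed form and identify it with the right-hand side. After the substitution $u = \frac{(1-c)t}{1-ct}$ (or a direct comparison with Euler's integral representation), the integral becomes an instance of the Euler integral for ${}_2F_1$, namely
\begin{equation}
  {}_2F_{1}(a,b;c';c) = \frac{\Gamma(c')}{\Gamma(b)\Gamma(c'-b)} \int_0^1 u^{b-1}(1-u)^{c'-b-1}(1-cu)^{-a}\,du,
\end{equation}
with the parameters $a=1-X$, $b=1-Y$, $c'=1-X-Y$. Matching the $\Gamma$-factors then yields exactly the prefactor $(1-c)\tfrac{\Gamma(1-X)\Gamma(1-Y)}{\Gamma(1-X-Y)}$. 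I anticipate that the main obstacle is the first step: justifying the interchange of the infinite sums with the integral and, above all, correctly identifying the $\Gamma$-function normalizations that arise from resumming the iterated integrals of $\omega_0$ (the $t^{-X}$ factor is really a regularized/Taylor-coefficient statement, not a literal term-by-term identity). Keeping careful track of these normalization constants, and of the shift by $1$ in the index $m+1$, is where the bookkeeping is delicate; once the integral is in beta form, the identification with ${}_2F_1$ is routine via Euler's formula.
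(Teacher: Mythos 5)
Your strategy is exactly the paper's own (which follows Kaneko--Tsumura's proof of their Theorem 3.6): collapse the generating series into a single integral over $[0,1]$ via symmetrization, then evaluate it with Euler's integral for ${}_2F_{1}$. However, the two formulas you actually write down are incorrect, and they fail precisely at the step you defer as ``bookkeeping.'' First, a sign error: since $\logc(t):=\int_0^t\omega_1=\log\frac{1-ct}{1-t}$, resumming the block of $\omega_1$'s produces $\bigl(\tfrac{1-ct}{1-t}\bigr)^{Y}=\bigl(\tfrac{1-t}{1-ct}\bigr)^{-Y}$, whereas your integrand carries $\bigl(\tfrac{1-ct}{1-t}\bigr)^{-Y}$. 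Moreover your parameter assignment in Euler's formula is incompatible with your own integrand: matching $u^{b-1}$ with $t^{-X}$ forces $b=1-X$, not $b=1-Y$, and then your integral evaluates to $(1-c)\frac{\Gamma(1-X)\Gamma(Y)}{\Gamma(1-X+Y)}\,{}_2F_{1}(1-X,1+Y;1-X+Y;c)$, which is not the right-hand side of the theorem.

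Second, and more importantly, the correct resummation is not $\int_0^1(\cdots)^{-Y}t^{-X}\,\omega_1(t)$. The word $1^n0^m$ carries $Y^n$, but symmetrization of the $n$ forms $\omega_1$ leaves only $\logc(t)^{n-1}/(n-1)!$ (one $\omega_1$ serves as the integration measure), and $\sum_{m\geq 1}\log(1/t)^m X^m/m! = t^{-X}-1$; hence
\begin{equation*}
  F(X,Y) \;=\; \sum_{m,n \geq 1} Z_c(\underbrace{1,\ldots,1}_{n-1},m+1)\, X^m Y^n
  \;=\; Y\int_{0}^{1} \left(\frac{1-t}{1-ct}\right)^{-Y} \bigl(t^{-X}-1\bigr)\, \omega_1(t).
\end{equation*}
The prefactor $Y$ and the subtracted $1$ are not an ``empty-term correction'' that can be waved away at the end; they are where the proof lives. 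Writing $F=Y(I_1-I_2)$ with $I_1=\int_0^1(\cdots)t^{-X}\omega_1$ and $I_2=\int_0^1(\cdots)\omega_1$, one needs the separate evaluation $I_2=\int_0^\infty e^{Yw}\,dw=-1/Y$ (substitute $w=\logc(t)$, which maps $(0,1)$ to $(0,\infty)$; valid for $\Re Y<0$), so that $F=YI_1+1$ and thus $1-F=-YI_1$. Euler's integral then gives $I_1=(1-c)\frac{\Gamma(1-X)\Gamma(-Y)}{\Gamma(1-X-Y)}\,{}_2F_{1}(1-X,1-Y;1-X-Y;c)$ --- note $\Gamma(-Y)$, not $\Gamma(1-Y)$ --- and the leftover factor $-Y$ is absorbed via $-Y\,\Gamma(-Y)=\Gamma(1-Y)$ to produce exactly the stated right-hand side. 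These three ingredients (the factor $Y$, the evaluation of the $-1$ piece, and the $\Gamma$-shift) constitute the actual content of the proof; none of them appears, or would come out correctly, in your version, and with your $F$ the quantity $1-F$ is simply a different function. The plan of attack is the right one, but the proof as proposed does not go through.
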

\begin{proof}
  The proof is essentially the proof of Theorem 3.6 in Kaneko and Tsumura \cite{kaneko_final}. Let us only sketch the main steps. Define an auxiliary function
  \begin{equation*}
    \logc(t) = \int_{0}^{t} \omega_1.
  \end{equation*}
  Because the integrand of $\omega_1$ is positive on $[0,1]$, the
  function $\logc$ maps as a diffeomorphism the open interval $(0,1)$
  to the positive real line $\RR_{>0}$.  From the iterated integral
  \eqref{iterated} and the conversion rule \eqref{conversion}, one
  gets by symmetrization that
  \begin{equation*}
    Z_c(\underbrace{1,\ldots,1}_{n-1},m+1) = \frac{1}{(n-1)!m!}\int_{0}^{1} \logc(t)^{n-1} \log(1/t)^m \omega_1(t).
  \end{equation*}
  Therefore
  \begin{equation*}
    \sum_{m,n \geq 1} Z_c(\underbrace{1,\ldots,1}_{n-1},m+1) X^m Y^n=
    \int_{0}^{1} e^{\logc(t) Y} (t^{-X} - 1) \omega_1(t).
  \end{equation*}
  Let us write this integral as the sum of $I_1$ and $-I_2$, where
  \begin{equation*}
    I_1 = \int_{0}^{1} e^{\logc(t) Y} t^{-X} \omega_1(t)
    = (1-c) \int_{0}^{1} \left(\frac{1-t}{1-ct} \right)^{-Y} t^{-X} \frac{dt}{(1-t)(1-ct)}
  \end{equation*}
  and
  \begin{equation*}
    I_2 = \int_{0}^{1} e^{\logc(t) Y} \omega_1(t) = \int_{0}^{\infty} e^{wY} dw= -1/Y.
  \end{equation*}
  In $I_1$, one uses the involution
  \eqref{involution}, which implies \eqref{dlogs} and
  $-\log(s) = \logc(t)$. In $I_2$, one uses the change of variables
  $w = \logc(t)$. One concludes by evaluating $I_1$ using the classical Euler integral expression
  for the hypergeometric function:
  \begin{equation*}
    \frac{\Gamma(A)\Gamma(C-A)}{\Gamma(C)} {}_2F_{1}(A,B;C;z)
    = \int_{0}^{1} t^{A-1} (1-t)^{C-A-1} (1-z t)^{-B} dt.
  \end{equation*}
\end{proof}

As for the MTV, one can expand the iterated integrals for $Z_c$ into iterated
sums using \eqref{omega1}, but this will involve not only one but
a linear combination of iterated sums. For example,
\begin{equation*}
  Z_c(2) = \int_{0<s<t<1} \omega_{1}(s) \frac{dt}{t} = \sum_{n \geq 1} \frac{1-c^n}{n^2} = \Li_2(1) - \Li_2(c).
\end{equation*}
The same argument shows that more generally
\begin{equation}
  Z_c(m) = \Li_m(1) - \Li_m(c)
\end{equation}
for all integers $m \geq 2$, where $\Li_m$ is the $m$-th polylogarithm.

\section{Dimensions}

Let us declare that the function $Z_c(k_1,\dots,k_r)$ has weight
$k_1 + \dots + k_r$. This is also the number of integration signs in the iterated integral \eqref{iterated}.

Note that it is not clear at all that only weight-homogeneous linear
relations can exist between the functions $Z_c$. This is probably
expected from the motivic philosophy, as in the classical case of MZV
\cite{brown_decomposition} and also for MTV.

Assuming that, one can then ask the following question: what are the
graded dimensions (with respect to the weight) of the vector space
$\aMTVc$ spanned by the functions $Z_c(k_1,\dots,k_r)$ over $\QQ$ ? One can
also ask the same question for any fixed value of $c$.

When $c = 0$, this question is about the algebra $\aMZV$ of MZV and
has been studied a lot. The conjecture by Zagier states that the
dimensions are the Padovan numbers. This has been proved by Brown in
the setting of motivic multiple zeta values \cite{brown_annals}.

When $c=-1$, the question has been considered in \cite{kaneko_final},
where the authors have performed large-scale computations to find the
expected dimensions in low degrees. Based on this data, one could try
to find an analogue of Zagier's conjecture. A proposal is made below
in \S \ref{conjecture_mtv}.

Here is a little table for $\aMZV$ and $\aMTV$.
\begin{equation*}
  \begin{array}{rrrrrrrrrrrrrrrrr}
          n & 0 & 1 & 2 & 3 & 4 & 5 & 6 & 7 & 8 & 9 & 10 & 11&12&13\\
          \text{MZV} & 1 & 0 & 1 & 1 & 1 & 2 & 2 & 3 & 4 & 5& 7 & 9&12&16\\
          \text{MTV} & 1 & 0 & 1 & 1 & 2 & 2 & 4 & 5 & 9 & 10&19&23&42&49\\
        \end{array}
\end{equation*}
    
It would also be interesting to consider other special values for $c$,
such as roots of unity or the inverse golden ratio, as some related variants of
multiple zeta values have already been studied. Besides $0$ and $-1$,
what are the exceptional values for $c$ where the dimensions are
smaller than those of $\aMTVc$ ?

\subsection{Constraints of duality}

\label{upper_bound}

As the functions $Z_c$ satisfy the duality relations
and form an algebra $\aMTVc$ under the shuffle product, one can try to get
purely algebraic upper bounds on the graded dimensions of this algebra.

For this, consider the shuffle algebra $\aA$ on all words in letters $0$
and $1$. The subspace $\aA'$ of $\aA$ spanned by all words starting with
$1$ and ending with $0$ is a subalgebra. One can define in $\aA'$ formal
elements $Z(k_1,\dots,k_r)$ using the conversion rule
\eqref{conversion}.

Let $\aB$ be the quotient of $\aA'$ by the ideal generated by elements
$Z(\alpha) - Z(\alpha^*)$ for all indices $\alpha$, where $*$ is the
duality of indices. This ideal contains more linear relations, the
first one being
\begin{equation}
  Z(3) (Z(3) - Z(1,2)) = 0.
\end{equation}

Using a computer, one can find the first few dimensions of $\aB$ in
small degrees:
\begin{equation*}
  \begin{array}{rrrrrrrrrrrrrrrrr}
          n & 0 & 1 & 2 & 3 & 4 & 5 & 6 & 7 & 8 & 9 & 10 & 11&12&13\\
          \aB_n & 1 & 0 & 1 & 1 & 3 & 4 & 9 & 15 & 31 & 55&109&203&397&754\\
        \end{array}.
\end{equation*}
It would be good to have some kind of formula for these dimensions.

\subsection{The case of $\aMTV$}

\label{conjecture_mtv}

In this section, we propose a guess for the dimensions of the algebra
$\aMTV$ of Kaneko-Tsumura. It would be rather bold to call this a
conjecture.

Let us start from the data given in Kaneko and Tsumura article:
\begin{equation*}
\begin{array}{rrrrrrrrrrrrrrrrr}
n & 0 & 1 & 2 & 3 & 4 & 5 & 6 & 7 & 8 & 9 & 10 & 11 & 12 & 13 & 14 & 15 \\
A & 1 & 0 & 1 & 1 & 2 & 2 & 4 & 5 & 9 & 10 & 19 & 23 & 42 & 49 & 91 & 110
\end{array}
\end{equation*}
which gives in line $A$ the conjectural dimensions obtained from
numerical experiments.

Let us add some lines to these table. First, add one line $B$ by
computing the sum of two consecutive terms in $A$, suitably
aligned. In the next line, compute the difference $B-A$ between the
lines $B$ and $A$. Next, do something rather strange, namely define
$A \sharp B$ as the column-per-column product of $A$ and $B$:
\begin{equation*}
  \begin{array}{rrrrrrrrrrrrrrrrrr}
n & 0 & 1 & 2 & 3 & 4 & 5 & 6 & 7 & 8 & 9 & 10 & 11 & 12 & 13 & 14 & 15 \\
A & 1 & 0 & 1 & 1 & 2 & 2 & 4 & 5 & 9 & 10 & 19 & 23 & 42 & 49 & 91 & 110 \\
B &   &   & 1 & 1 & 2 & 3 & 4 & 6 & 9 & 14 & 19 & 29 & 42 & 65 & 91 & 140 \\
B-A &   &   & 0 & 0 & 0 & 1 & 0 & 1 & 0 & 4 & 0 & 6 & 0 & 16 & 0 & 30 \\
A\sharp B &   &   & 1 & 1 & 4 & 6 & 16 & 30 & 81 & 140 & 361 & 667 &  & & & 
  \end{array}
\end{equation*}

Now comes the key observation: the lines $B-A$ and $A\sharp B$ seem to be the same, up to insertion of one $0$ between any two consecutive terms in $A\sharp B$.

Assuming that this equality continues to hold at every order, one can compute as many terms as one wants in the sequence $A$, using this presumed identity between $B-A$ and $A\sharp B$. This gives the sequence
\begin{multline*}
  1, 0, 1, 1, 2, 2, 4, 5, 9, 10, 19, 23, 42, 49, 91, 110, 201, 230, 431, 521, 952, 1112, 2064,\\ 2509, 4573, 5318, 9891, 12024, 21915, 25658, 47573, 57831, 105404, 122834, \dots
\end{multline*}

This is of course a rather strange procedure, which lacks an
interpretation in terms of the structure of the algebra of MTV.

In term of generating series, this can be summarized as
\begin{align*}
  A &= 1 + O(t^2),\\
  B &= (t + t^2) A - t,\\
  B &= A - 1 + t \operatorname{Diag}(A, B),
\end{align*}
where $\operatorname{Diag}(A, B)$ keeps only the diagonal terms in the product $AB$.

Another conjecture for the dimensions of $\aMTV$ has been proposed in Remark 2.2 of \cite{kaneko_final}.

\subsection{Dimensions with parameter $c$}

Let us now consider the dimensions for the algebra spanned by all
functions $Z_c$.

This section is the result of some experimental work, based on an
implementation of these functions using Pari. Linear relations were
searched in the intersection of the space of relations for MTV and
MZV, and only beyond the obvious relations deduced from duality.

Up to weight $6$, there seems to be no relation beyond the relations
that can be deduced from the duality relations. For example, in weight
$4$, there does not seem to be any relation over $\QQ$ between
$Z_c(1,3)$, $Z_c(2,2)$ and $Z_c(4)$. This implies that the graded
dimensions of $\aMTVc$ should be strictly larger than those of $\aMTV$.

In weight $7$, one finds $2$ linearly independant relations, not in
the span of relations implied by duality. So the expected dimension is
$13=15-2$. In extenso, these relations are
\begin{multline}
  Z_c(1,2,4) - 2 Z_c(1,3,3) - 4 Z_c(2,1,1,3) + 3 Z_c(2,1,4) \\
  + Z_c(2,2,3) - Z_c(2,3,2) + 2 Z_c(3,1,3) = 0
\end{multline}
and
\begin{multline}
  18 Z_c(1,1,5) + 26 Z_c(1,3,3) - 30 Z_c(1,6) + 45 Z_c(2,1,1,3) - 27 Z_c(2,1,4)\\
  - 8 Z_c(2,2,3) + 12 Z_c(2,3,2) - 15 Z_c(2,5) - 19 Z_c(3,1,3) +  Z_c(3,2,2)\\ - 4 Z_c(3,4) +  Z_c(4,1,2) = 0
\end{multline}
One can check that these $2$ relations hold exactly for MZV and
numerically for MTV.

In weight $8$, one finds $3$ linearly independant relations, not in the span of relations implied by duality. So the expected dimension is $28=31-3$. Here are these three relations as lists of coefficients:
\begin{multline*}
  [0, 0, 0, 0, 0, -1, 0, 0, -1, -3, 0, 2, 5, 10, -1, -5, 21, -2,\\
    -10, -18, 5, 0, 13, 27, -6, 52, -13, -48, 90, -45, -72]\\ % REL_8A
  [0, 0, 0, -2, 0, 0, 1, -8, 0, -2, -20, 10, 6, -10, 0, -2, -16,\\
    1, 4, 6, 20, -40, 14, -8, -4, -16, 7, 24, 0, 19, 16]\\ % REL_8B
  [0, 0, -5, 9, -5, 0, 0, 48, 4, 29, 135, -45, -56, -50, 4, 38, -144,\\
    7, 54, 117, -100, 270, -154, -227, 60, -479, 82, 345, -975, 366, 672] % rel_8C semble ok
\end{multline*}
between the $31$ elements
\begin{multline*}
  Z_c(8), Z_c(6,2), Z_c(5,1,2), Z_c(4,4), Z_c(4,2,2),
Z_c(4,1,3), Z_c(4,1,1,2), Z_c(3,5),\\ Z_c(3,1,2,2), Z_c(3,1,1,3),
Z_c(2,6), Z_c(2,4,2), Z_c(2,3,3), Z_c(2,2,4), Z_c(2,2,2,2),\\
Z_c(2,2,1,3), Z_c(2,1,5), Z_c(2,1,3,2), Z_c(2,1,2,3), Z_c(2,1,1,4),
Z_c(2,1,1,1,3),\\ Z_c(1,7), Z_c(1,4,3), Z_c(1,3,4), Z_c(1,3,1,3),
Z_c(1,2,5), Z_c(1,2,2,3), Z_c(1,2,1,4),\\ Z_c(1,1,6), Z_c(1,1,2,4),
Z_c(1,1,1,5)
\end{multline*}
that span the space modulo the relations induced by duality. These $3$ relations also hold exactly for MZV and numerically for MTV.

In weight $9$, a similar search found $15$ relations beyond
duality. So the expected dimension would be $40 = 55-15$. This is
slightly surprising, as one may have expected to get $41 = 13+28$
from the idea of summing the two previous terms when the weight
is odd. This idea seems to work for even weights in the case of
$\aMTV$. Either our experimental work has a flaw, or this idea must be
abandoned for $\aMTVc$.

Here is a table summarizing the experimental results.
\begin{equation*}
  \begin{array}{rrrrrrrrrrrrrrrrr}
          n & 0 & 1 & 2 & 3 & 4 & 5 & 6 & 7 & 8 & 9 & 10 & 11&12&13\\
          \text{MZV} & 1 & 0 & 1 & 1 & 1 & 2 & 2 & 3 & 4 & 5& 7 & 9&12&16\\
          \text{MTV} & 1 & 0 & 1 & 1 & 2 & 2 & 4 & 5 & 9 & 10&19&23&42&49\\
          \aMTVc & 1 & 0 & 1 & 1 & 3 & 4 & 9 & 13 & 28 & 40& &&&\\
          \aB & 1 & 0 & 1 & 1 & 3 & 4 & 9 & 15 & 31 & 55&109&203&397&754\\
        \end{array}
\end{equation*}
The line labelled $\aB$ is the upper bound assuming only the relations implied by the duality relations, as explained in \S \ref{upper_bound}.

\bibliographystyle{plain}
\bibliography{article_MTVc.bib}

\end{document}